\newtheorem{theorem}{Theorem}[section]
\newtheorem{cor}[theorem]{Corollary}
\newtheorem{lemma}[theorem]{Lemma}
\begin{document}

\newcommand{\N}{\mathbb{N}}

\def \cd {, \ldots ,}
\def \lf{\| f \|}
\def \bs{\backslash}
\def \ep{\varepsilon}

\def \F {\mathfrak{F}}
\def \fraka {\mathfrak{a}}
\def \U {\mathfrak{U}}
\def \frakp {\mathfrak{p}}
\def \frakm {\mathfrak{m}}
\def \frakc {\mathfrak{c}}

\def \cinf {C^\infty}
\def \cid {C^\infty (\partial D)}
\def \bbr {\mathbb R}
\def \bbrs {\mathbb R^2}
\def \bbc {\mathbb C}
\def \od {\overline D}
\def \ol {\overline L}
\def \oj {\overline J}
\def \ma {\mathfrak{M}_A}  
\def \mb {\mathfrak{M}_B}
\def \muc {\mathfrak{M}_{\mathscr U}}
\def \uc {\mathscr U}
\def \xy {(x,y)}
\def \fg {f \otimes g}
\def \vp {\varphi}
\def \pl {\partial L}
\def \ad {A(D)}
\def \aid {A^\infty (D)}
\def \ot {\otimes}
\def \rbl {R_b (L)}
\def\Int {{\rm Int}}

\title[Weak Sequential Completeness of Uniform Algebras]{Weak Sequential Completeness of\\ Uniform Algebras}
\author{J. F. Feinstein}
\address{School of Mathematical Sciences, University of Nottingham, University Park, Nottingham NG7 2RD, UK }
\email{Joel.Feinstein@nottingham.ac.uk}
\author{Alexander J. Izzo}
\address{Department of Mathematics and Statistics, Bowling Green State University, Bowling Green, OH 43403}
\thanks{The authors' collaboration was partially supported by the Research in Teams program at the Banff International Research Station.  The first author was also partially supported by a Simons collaboration grant and by NSF Grant DMS-1856010.}
\email{aizzo@bgsu.edu}

\subjclass[2010]{Primary 46J10, 46B10}
\keywords{uniform algebra, weakly sequentially complete, reflexive}

\begin{abstract}
\vskip 24pt
We give a simple, elementary proof that a uniform algebra is weakly sequentially complete if and only if it is finite-dimensional.
\end{abstract}

\maketitle

\section{The Result}

For
$X$ a compact Hausdorff space, we denote by $C(X)$ the algebra of all continuous complex-valued functions on $X$ with the supremum norm
$ \|f\|_{X} = \sup\{ |f(x)| : x \in X \}$.  A \emph{uniform algebra} on $X$ is a closed subalgebra of $C(X)$ that contains the constant functions and separates
the points of $X$.
On every compact Hausdorff space $X$ there is the trivial example of a uniform algebra, namely $C(X)$ itself.  By the Stone-Weierstrass theorem, $C(X)$ is the only self-adjoint uniform algebra on the space $X$.  However, there are many other (nonself-adjoint) uniform algebras.  A typical example is the disc algebra which consists of the continuous complex-valued functions on the closed unit disc that are holomorphic on the open unit disc.  The uniform algebras form a class of Banach algebras that is important both in the field of Banach algebras and in complex analysis, and uniform algebras also have applications to operator theory.  In this paper we consider certain Banach space properties of uniform algebras, primarily weak sequential completeness and reflexivity.  (These terms are defined in the next section.)

Every weakly sequentially complete uniform algebra is finite-dimensional.  Although this fact is known to a few experts, the result is certainly not well known and seems not to be explicitly stated in the literature.  In this paper we present a simple, elementary proof of the result.  A different proof, using Arens regularity and bounded approximate identities, is given in the forthcoming book of Garth Dales and Ali \" Ulger \cite[Section~3.6]{Dales-Ulger}.  An anonymous referee has pointed out that using results in the literature a stronger statement can be obtained: every infinite-dimensional uniform algebra contains an isometric copy of the Banach space $c$ of all convergent sequences of complex numbers.  We give the referee's argument near the end of the paper.  

\begin{theorem}\label{main}
Every weakly sequentially complete uniform algebra is finite-dimensional.
\end{theorem}

Since every reflexive Banach space is weakly sequentially complete, we have the following as an immediate consequence.

\begin{cor}
For a uniform algebra $A$, the following are equivalent.
\begin{itemize}
\item[(a)] $A$ is weakly sequentially complete.
\item[(b)] $A$ is reflexive.
\item[(c)] $A$ is finite-dimensional.
\end{itemize}
\end{cor}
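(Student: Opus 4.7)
The plan is to construct, for each infinite-dimensional uniform algebra $A$, a bounded sequence in $A$ whose pointwise limit on $\mathfrak{M}_A$ exists but lies outside $A$. By Grothendieck's characterization, a sequence in the closed subspace $A \subset C(\mathfrak{M}_A)$ is weakly Cauchy if and only if it is norm-bounded and pointwise Cauchy on $\mathfrak{M}_A$, and weak convergence to some $g \in A$ amounts to bounded pointwise convergence to $g$. A bounded pointwise-convergent sequence in $A$ whose limit fails to lie in $A$ is therefore weakly Cauchy without a weak limit in $A$, witnessing failure of weak sequential completeness.

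The standard peaking construction will supply most of the argument. I take a non-constant $f \in A$; after normalizing $\|f\| \le 1$ and multiplying by a modulus-one scalar so that $f(x_0) = 1$ for some $x_0 \in \mathfrak{M}_A$, I set $g = (1 + f)/2 \in A$. Since $|(1+w)/2| \le 1$ on the closed unit disk with equality only at $w = 1$, one has $\|g\| \le 1$, $g(x_0) = 1$, and $|g(x)| < 1$ for every $x$ with $f(x) \ne 1$. Then $(g^n)$ is bounded by $1$ and converges pointwise on $\mathfrak{M}_A$ to $\chi_{\{f=1\}}$. By Shilov's idempotent theorem, this characteristic function lies in $A$ if and only if $\{f=1\}$ is clopen; so $(g^n)$ is the required sequence whenever the (always closed) set $\{f=1\}$ fails to be open.

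The substantive step is to arrange this. Suppose otherwise: every non-constant $f \in A$ with $\|f\|=1$ and $1 \in f(\mathfrak{M}_A)$ has $\{f=1\}$ clopen, and after rotations every peripheral level set of every element of $A$ is clopen. If some $f \in A$ has an infinite peripheral spectrum $\{\lambda_n\}$ accumulating at some $\lambda_\infty \in \sigma(f)$, then the pairwise disjoint clopens $E_n = f^{-1}(\lambda_n)$ satisfy $\chi_{E_n} \in A$, and the continuity of $f$ together with the nonemptiness of $f^{-1}(\lambda_\infty)$ forces $\bigcup_n E_n$ to have limit points outside itself; the partial sums $\sum_{k \le n} \chi_{E_k} \in A$ then converge pointwise to the discontinuous $\chi_{\bigcup_n E_n} \notin A$. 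Otherwise every element of $A$ has finite peripheral spectrum, and an iterative spectral peeling (passing to the clopen set $\{|f| < \|f\|\}$ at each stage) combined with Shilov's theorem shows $A$ is spanned by its infinitely many idempotents; then $\mathfrak{M}_A$ is infinite, totally disconnected, and compact Hausdorff, and an antichain of clopens built near a non-isolated point---using the Boolean-algebraic fact that every infinite Boolean algebra contains a countable antichain with non-clopen union---yields the required weakly Cauchy non-convergent sequence via its partial sums. I expect this last case analysis to be the main obstacle, especially when $\mathfrak{M}_A$ is not first-countable and a straightforward shrinking-neighborhood construction at a non-isolated point does not by itself produce a non-closed union.
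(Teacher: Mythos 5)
Your overall mechanism --- a norm-bounded sequence in $A$ that converges pointwise on $\mathfrak{M}_A$ to a discontinuous characteristic function is weakly Cauchy but has no weak limit in $A$ --- is exactly the engine of the paper's proof, and your first reduction (via $g=(1+f)/2$ and its powers) is sound: it finishes the argument whenever some peripheral level set $\{f=\lambda\}$, $|\lambda|=\|f\|$, fails to be open. The genuine gap is in the residual case where every such level set is clopen and every peripheral spectrum is finite. There you assert that ``iterative spectral peeling \ldots\ shows $A$ is spanned by its infinitely many idempotents,'' but the peeling as described does not establish this: the radii $r_0>r_1>r_2>\cdots$ obtained by successively passing to the clopen sets $\{|f|<r_n\}$ may decrease to a strictly positive limit, in which case finitely many peels never exhaust $f$ and no spanning by idempotents follows. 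Since you yourself flag this case as ``the main obstacle,'' the proposal as written is an incomplete outline rather than a proof. (A smaller imprecision: in the infinite-peripheral-spectrum case, a point of $f^{-1}(\lambda_\infty)$ need not be a limit point of the union of the $E_n$, so your stated justification that the union is not closed does not quite work as phrased.)

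The gap is fillable, and more easily than your closing worry about non-first-countable spaces suggests. The one fact that closes every sub-case is this: an infinite pairwise disjoint family of nonempty clopen subsets of a compact space has a union that is not compact (the family is an open cover of the union with no finite subcover), hence not closed, hence not clopen; by Shilov's idempotent theorem the partial sums of the corresponding idempotents then form your weakly Cauchy, non-weakly-convergent sequence. With this in hand, the sets $E_n$ in your infinite-peripheral-spectrum case already form such a family (no appeal to $\lambda_\infty$ is needed); and in the finite-peripheral-spectrum case either the peeling of some $f$ fails to terminate, in which case the nonempty clopen sets $\{|f|=r_n\}$ form such a family, or every element of $A$ takes finitely many values, whence $A$ is algebraically spanned by idempotents, the clopen algebra of $\mathfrak{M}_A$ is infinite because $A$ is infinite-dimensional, and every infinite Boolean algebra contains an infinite antichain. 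For comparison, the paper avoids this entire case analysis by a different route: Bishop's antisymmetric decomposition produces (when $A\neq C(X)$) a connected maximal antisymmetry set with more than one point, and any peak set through one of its generalized peak points that does not contain the whole set is automatically non-open; the case $A=C(X)$ is handled by exhibiting a non-open closed $G_\delta$-set. That argument reaches the non-open peak set in two short steps, whereas your route, once completed, is longer but stays entirely within spectral and idempotent technology.
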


The fact that every reflexive uniform algebra is finite-dimensional does appear in the literature.  However, the only explicit mention of this fact that we have found in the literature is at the very end of the paper \cite{Ellis} where the result is obtained as a consequence of the general theory developed in that paper concerning a representation due to Asimow \cite{Asimow} of a uniform algebra as a space of affine functions.  A closely related result, which we will discuss at the end of our paper, appears in the paper \cite{BW} of Paul Beneker and Jan Wiegerinck: no separable infinite-dimensional  uniform algebra is a dual space.  The fact that every reflexive uniform algebra is finite-dimensional follows immediately since every infinite-dimensional uniform algebra contains a separable infinite-dimensional uniform algebra.

One can also consider what are sometimes called {\it nonunital uniform algebras\/}.  These algebras are roughly the analogues on noncompact locally compact Hausdorff spaces of the uniform algebras on compact Hausdorff spaces.  (The precise definition is given in the next section.)  Every nonunital uniform algebra is, in fact, a maximal ideal in a uniform algebra, and hence is, in particular, a codimension 1 subspace of a uniform algebra.  Since it is easily proven that the failure of weak sequential completeness is inherited by finite codimensional subspaces, it follows at once that the above results hold also for nonunital uniform algebras.

It should be noted that the above results do \emph{not} extend to general semisimple commutative Banach algebras.  For instance, for $1\leq p\leq\infty$, the Banach space $\ell^p$ of $p$th-power summable sequences of complex numbers is a Banach algebra under coordinatewise multiplication and is of course well known to be reflexive for $1<p<\infty$; for $p=1$ the space is nonreflexive but is weakly sequentially complete \cite[p.~140]{Wojtaszczyk}.  Also for $G$ an infinite locally compact abelian group, the Banach space $L^1(G)$ is a Banach algebra with convolution as multiplication and is nonreflexive but is weakly sequentially complete \cite[p.~140]{Wojtaszczyk}.  
All of these Banach algebras are nonunital, with the exception of the algebras $L^1(G)$ for $G$ a discrete group. However, adjoining an identity in the usual way where necessary, one obtains from them unital Banach algebras with the same properties with regard to reflexivity and weak sequential completeness.

In the next section, which can be skipped by those well versed in basic uniform algebra and Banach space concepts, we recall some definitions.  The proof of Theorem~\ref{main} is then presented in Section~\ref{theproof}.  A proof of the stronger statement that every infinite-dimensional uniform algebra contains an isometric copy of the Banach space $c$ is given in Section~\ref{c}.  In the concluding Section~\ref{dual-section} we discuss the theorem of Beneker and Wiegerinck that no separable infinite-dimensional uniform algebra is a dual space.

\section{Definitions}~\label{prelim}

Recall from the introduction that a uniform algebra on a compact Hausdorff space $X$ is an algebra of continuous complex-valued functions on $X$ that contains the constant functions, separates the points of $X$, and is (uniformly) closed in the algebra $C(X)$ of all continuous complex-valued functions on $X$.
For $Y$ a noncompact, locally compact Hausdorff space, we denote by $C_0(Y)$ the algebra of continuous complex-valued functions on $Y$ that vanish at infinity, equipped  with the supremum norm.
By a \emph{nonunital uniform algebra} $B$ on $Y$ we mean a closed subalgebra of $C_0(Y)$ that strongly separates points in the sense that for every pair of distinct points $x$ and $y$ in $Y$ there is a function $f$ in $B$ such that $f(x)\neq f(y)$ and $f(x)\neq 0$.  If $B$ is a nonunital uniform algebra on $Y$, then the linear span of $B$ and the constant functions on $Y$ forms a unital Banach algebra that can be identified with a uniform algebra $A$ on the one-point compactification of $Y$, and under this identification $B$ is the maximal ideal of $A$ consisting of the functions in $A$ that vanish at infinity.

Let $A$ be a uniform algebra on a compact Hausdorff space $X$.  A closed subset $E$ of $X$ is a \emph{peak set} for $A$ if there is a function $f\in A$ such that $f(x)=1$ for all $x\in E$ and $|f(y)|< 1$ for all $y\in X\setminus E$.  Such a function $f$ is said to \emph{peak on $E$}.  A \emph{generalized peak set} is an intersection of peak sets.  A point $p$ in $X$ is a \emph{peak point} if the singleton set $\{p\}$ is a peak set, and $p$ is a \emph{generalized peak point} if $\{p\}$ is a generalized peak set.
A closed subset $E$ of $X$ is an \emph{interpolation set} for $A$ if $A|E=C(E)$, where $A|E$ denotes the algebra of restrictions of functions in $A$ to $E$.  The set $E$ is a \emph{peak interpolation set} for $A$ if $E$ is both a peak set and an interpolation set for $A$.
For $\Lambda$ a bounded linear functional on $A$, we say that a complex regular Borel measure $\mu$ on $X$ represents $\Lambda$ if $\Lambda(f)=\int f \, d\mu$ for every $f\in A$.

A Banach space $A$ is \emph{reflexive} if the canonical embedding of $A$ into its double dual $A^{**}$ is a bijection.  The Banach space $A$ is \emph{weakly sequentially complete} if every weakly Cauchy sequence in $A$ is weakly convergent in $A$.  More explicitly the condition is this: for each sequence $(x_n)$ in $A$ such that $(\Lambda x_n)$ converges for every $\Lambda$ in the dual space $A^*$, there exists an element $x$ in $A$ such that $\Lambda x_n \rightarrow \Lambda x$ for every $\Lambda$ in $A^*$.

\section{The Proof}\label{theproof}

Our proof of Theorem~\ref{main} hinges on the following lemma.

\begin {lemma} \label{not-open}
Every infinite-dimensional uniform algebra has a peak set that is not open.
\end {lemma}

The proof of the above lemma uses two preliminary lemmas.

\begin {lemma} \label{char}
Let $A$ be a uniform algebra on a compact Hausdorff space $X$, and let $P$ be an open peak set for $A$.  Then the characteristic function of $P$ lies in $A$.
\end {lemma}

\begin{proof}
Choose a function $f$ that peaks on $P$.  Then the sequence $(f^n)$ of powers of $f$ converges uniformly to the characteristic function $\chi_P$ of $P$, and hence $\chi_P$ lies in $A$.
\end{proof}

\begin {lemma} \label{G}
Every infinite compact Hausdorff space $X$ contains a closed $G_\delta$-set that is not open.
\end {lemma}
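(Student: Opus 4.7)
The plan is to argue by contradiction: suppose that every closed $G_\delta$-subset of $X$ is open (hence clopen) and show this forces $X$ to be finite.

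I would first deduce two consequences of the assumption. (i) Every continuous $f \colon X \to \mathbb{R}$ has finite range. Indeed, if $f(X)$ were infinite, being compact it would have an accumulation point $t_0 \in f(X)$, so we could pick $t_n \in f(X)$ with $t_n \neq t_0$ and $t_n \to t_0$ and choose $x_n \in f^{-1}(t_n)$. A cluster point $y$ of $(x_n)$ in the compact space $X$ would satisfy $f(y) = t_0$ by continuity, since some subnet $x_{n_\alpha} \to y$ gives $f(x_{n_\alpha}) = t_{n_\alpha} \to t_0$. But $f^{-1}(\{t_0\})$ is a closed $G_\delta$, hence clopen by assumption, so $y$ would have an open neighborhood $V \subseteq f^{-1}(\{t_0\})$; the subnet is eventually in $V$, forcing $f(x_{n_\alpha}) = t_0$, while simultaneously $f(x_{n_\alpha}) = t_{n_\alpha} \neq t_0$, a contradiction. (ii) Clopen sets form a basis for the topology: for $p$ in any open $U$, complete regularity (from compact Hausdorff) provides a continuous $f \colon X \to [0,1]$ with $f(p) = 0$ and $f \equiv 1$ on $X \setminus U$, and then $f^{-1}(\{0\})$ is a closed $G_\delta$, hence clopen, with $p \in f^{-1}(\{0\}) \subseteq U$.

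Using (ii) I would construct a continuous function on $X$ with infinite range, contradicting (i). Inductively build a strictly decreasing chain of infinite clopen subsets $X = X_0 \supsetneq X_1 \supsetneq X_2 \supsetneq \cdots$ as follows: given infinite clopen $X_n$, pick distinct $p, q \in X_n$, apply (ii) to produce a clopen $C$ in $X$ with $p \in C$ and $q \notin C$, and split $X_n$ into the two nonempty clopen pieces $X_n \cap C$ and $X_n \setminus C$; take $X_{n+1}$ to be whichever piece is infinite. Then the sets $C_n := X_{n-1} \setminus X_n$ for $n \geq 1$ are nonempty, pairwise disjoint, and clopen, and the series $f = \sum_{n \geq 1} 2^{-n} \chi_{C_n}$ converges uniformly to a continuous function whose range contains $\{2^{-n} : n \geq 1\}$, contradicting (i).

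The main obstacle is the cluster-point argument in (i): since an arbitrary compact Hausdorff space need not be sequentially compact, it must be phrased in terms of convergent subnets rather than subsequences, and continuity of $f$ invoked net-theoretically to conclude $f(y) = t_0$.
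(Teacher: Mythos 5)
Your proof is correct, but it goes by a genuinely different route than the paper's. The paper argues directly: it uses Urysohn to build a sequence $(f_n)$ separating a countably infinite subset of $X$, packages these into a continuous map $F\colon X\to[0,1]^\omega$, and observes that the fibers of $F$ form an infinite disjoint family of closed $G_\delta$-sets covering $X$, which by compactness cannot all be open. You instead argue by contradiction, in effect proving that a compact Hausdorff space in which every closed $G_\delta$ is clopen must be finite: your step (i) (continuous real functions have finite range) and step (ii) (clopen sets form a basis) are both correct consequences of the hypothesis, and your chain $X_0\supsetneq X_1\supsetneq\cdots$ of infinite clopen sets yields the disjoint nonempty clopen pieces $C_n$ whose weighted characteristic sum is a continuous function with infinite range, contradicting (i). Your handling of the one delicate point --- replacing subsequences by subnets in (i), since compact Hausdorff spaces need not be sequentially compact --- is appropriate. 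The paper's proof is shorter and exhibits the offending $G_\delta$-set more explicitly; yours is longer but isolates a structural dichotomy (finite range of continuous functions versus a clopen basis) that is of independent interest. As a small streamlining, note that step (i) could be bypassed entirely: $\bigcap_n X_n$ is itself a closed $G_\delta$, and if it were open then $\{\bigcap_n X_n\}\cup\{C_n\}_{n\ge 1}$ would be an infinite disjoint open cover of $X$ by nonempty sets with no finite subcover --- which is essentially the paper's concluding compactness argument.
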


\begin{proof}
Let $\{x_n\}$ be a countably infinite subset of $X$.  For each $n=1, 2, 3, \ldots$ choose by Urysohn's lemma a continuous function $f_n:X\rightarrow [0,1]$ such that $f_n(x_k)=0$ for $k<n$ and $f_n(x_n)=1$.  Let $F:X\rightarrow [0,1]^\omega$ be given by $F(x)=\bigl(f_n(x)\bigr)_{n=1}^\infty$.  Then $F(x_m)\neq F(x_n)$ for all $m\neq n$.  Thus the collection $\{F^{-1}(t):t\in [0,1]^\omega\}$ is infinite.  Each of the sets $F^{-1}(t)$ is a closed $G_\delta$-set because $F$ is continuous and $[0,1]^\omega$ is metrizable.  Since these sets form an infinite collection of disjoint sets that cover $X$, they cannot all be open, by the compactness of $X$.
\end{proof}

\begin{proof}[Proof of Lemma~\ref{not-open}]
Let $A$ be an infinite-dimensional uniform algebra on a compact Hausdorff space $X$.

In case $A=C(X)$, the result follows immediately from Lemma~\ref{G}, since in that case it follows from Urysohn's lemma 
that the peak sets of $A$ are exactly the closed $G_\delta$-sets in $X$ (see for instance \cite[Section~33, exercise~4]{Munkres}).  

Now consider the case when $A$ is a proper subalgebra of $C(X)$.  In that case, by the Bishop antisymmetric decomposition \cite[Theorem~2.7.5]{Browder} there is a maximal set of antisymmetry $E$ for $A$ that has more than one point.  Since every maximal set of antisymmetry is a generalized peak set, and every generalized peak set contains a generalized peak point (see the proof of \cite[Corollary~2.4.6]{Browder}), $E$ contains a generalized peak point $p$.  Choose a peak set $P$ for $A$ such that $p\in P$ but $P\nsupseteq E$.  The set $P$ is not open in $X$, for if it were then the characteristic function of $P$ would be in $A$ by Lemma~\ref{char}, which would contradict that $E$ is a set of antisymmetry for $A$.
\end{proof}

\begin{proof}  [Proof of Theorem~\ref{main}]
Let $A$ be an infinite-dimensional uniform algebra on a compact Hausdorff space $X$.  By Lemma~\ref{not-open}, there exists a peak set $P$ for $A$ that is not open.  Choose a function $f\in A$ that peaks on $P$.  

For a bounded linear functional $\Lambda$ on $A$, and a complex regular Borel measure $\mu$ on $X$ that represents $\Lambda$, we have by the Lebesgue dominated convergence theorem that
\begin{equation}\label{Cauchy}
\Lambda(f^n) = \int f^n\, d\mu  \rightarrow \mu(P) \quad {\rm as\quad} n\rightarrow \infty.
\end{equation}
Thus the sequence $(f^n)_{n=1}^\infty$ in $A$ is weakly Cauchy.  Furthermore (\ref{Cauchy}) shows that, regarded as a sequence in the double dual $A^{**}$, the sequence $(f^n)_{n=1}^\infty$ is weak*-convergent to a functional $\Phi\in A^{**}$ that satisfies the equation
$\Phi(\Lambda)=\mu(P)$
for every functional $\Lambda\in A^*$ and every regular Borel measure $\mu$ that represents $\Lambda$.

For $x\in X$, denote the point mass at $x$ by $\delta_x$.  Denote the characteristic function of the set $P$ by $\chi_P$.  Then
\begin{equation}
\Phi(\delta_x)=\chi_P(x)
\end{equation}
while for any function $h\in A$ we have
\begin{equation}
\int h \, d\delta_x=h(x).
\end{equation}
Since $P$ is not open in $X$, the characteristic function 
$\chi_P$ is not continuous and hence is not in $A$.  Consequently, equations (2) and (3) show that the functional $\Phi\in A^{**}$ is not induced by an element of $A$.  We conclude that the weakly Cauchy sequence $(f^n)_{n=1}^\infty$ is not weakly-convergent in $A$.
\end{proof}

\section{Every infinite-dimensional uniform algebra contains $c$}\label{c}

In this section we give a proof of the following theorem along the lines suggested by a referee.

\begin{theorem}\label{referee-theorem}
Every infinite-dimensional uniform algebra contains an isometric copy of the Banach space $c$.
\end{theorem}

As mentioned in the introduction, this theorem strengthens Theorem~\ref{main}.  To see this, first note that the Banach space $c_0$ of sequences of complex numbers converging to zero is not weakly sequentially complete because in $c_0$ the sequence $(1,0,0,0,\ldots)$, ($1,1,0,0,\ldots)$, $(1,1,1, 0,\ldots)$ is weakly Cauchy but not weakly convergent.  Since every norm-closed subspace of a weakly sequentially complete Banach space is itself weakly sequentially complete, it follows immediately that a weakly sequentially complete Banach space can not contain a copy of $c_0$, and hence can not contain a copy of $c$.

The proof of Theorem~\ref{referee-theorem} uses the following two results.  The first of these is due to Alain Bernard while the second is due to Aleksander Pe\l czy\' nski.   Proofs of these results can be found in \cite[pp.~217--219, 241--242]{Stout}. 

\begin{theorem}\label{P1}
If $A$ is a uniform algebra on an infinite compact metrizable space, then there exists an infinite peak interpolation set for $A$.
\end{theorem}

\begin{theorem}\label{P2}
Let $A$ be a uniform algebra on a compact Hausdorff space $X$, and let $K$ be a peak interpolation set for $A$.  Then there exists a linear isometry $L: C(K)\rightarrow A$ such that $(Lf)|K=f$ for all $f\in C(K)$.
\end{theorem}

We also use the following result which is surely known but whose proof we include for the reader's convenience.

\begin{lemma}\label{easy}
Let $S$ be an infinite, compact metrizable space.  Then $C(S)$ contains an isometric copy of the Banach space $c$.
\end{lemma}

\begin{proof}
Choose a sequence of distinct points $s_n$ in $S$ converging to a point $s\in S$.  Set $E=\{s_n: n=1, 2, \ldots\} \cup \{s\}$.  Clearly $C(E)$ is isometric to $c$, and by Theorem~\ref{P2}, for example, there is an isometric copy of $C(E)$ in $C(S)$.
\end{proof}

\begin{proof}[Proof of Theorem~\ref{referee-theorem}]
Let $A$ be an infinite-dimensional uniform algebra on a compact Hausdorff space $X$.  By replacing $A$ by a suitable closed subalgebra, we may assume that $A$ is separable and hence that $X$ is metrizable.  
Then by theorem~\ref{P1}, there exists an infinite subset $K$ of $X$ such that $K$ is a peak interpolation set for $A$.  By Theorem~\ref{P2}, there is an isometric copy of $C(K)$ in $A$, and hence by Lemma~\ref{easy}, $A$ contains an isometric copy of $c$.
\end{proof}

\section{No separable infinite-dimensional  uniform algebra is a dual space}\label{dual-section}

By a theorem of Bessaga and Pe\l czy\' nski \cite[Theorem~10, p. 48]{Diestel}, if the dual space of a Banach space contains an isomorphic copy of the Banach space $c_0$, then it contains an isomorphic copy of $\ell^\infty$.  (Two Banach spaces are \emph{isomorphic} if there is a linear homeomorphism between them.  Isomorphic Banach spaces need not be isometric.)  Thus the following result of Beneker and Wiegerinck \cite{BW} follow immediately from Theorem~\ref{referee-theorem}.

\begin{theorem}\label{not-dual}
No separable infinite-dimensional  uniform algebra is a dual space.
\end{theorem}

Note, however, that there are nonseparable uniform algebras that are dual spaces.  For instance, the uniform algebra $C(\beta \N)$ of all continuous complex-valued functions on the Stone-\v Cech compactification of the positive integers $\N$ can be identified with $\ell^\infty$ and thus is isometrically isomorphic to the dual of $\ell^1$.

Beneker and Wiegerinck obtained Theorem~\ref{not-dual} as a corollary of the main theorem of their paper \cite{BW} which concerns strongly exposed points.  A point $f$ in the closed unit ball $B$ of a Banach space $A$ is said to be \emph{strongly exposed} if there exists $\Lambda\in A^*$ with the properties $\Lambda(f)=\|\Lambda\|=1$ and for every sequence $(g_n)_{n=1}^\infty$ in $A$ such that $\lim_{n\rightarrow\infty}\Lambda(g_n)=\lim_{n\rightarrow\infty}\|g_n\|=1$, we have $\lim_{n\rightarrow\infty} g_n=f$ in $A$.  Beneker and Wiegerinck's main result \cite{BW} states that the unit ball of an infinite-dimensional uniform algebra has no strongly exposed points.  As noted by Beneker and Wiegerinck, Theorem~\ref{not-dual} follows immediately since the the unit ball of a separable dual space is the closed convex hull of its strongly exposed points \cite{Phelps}.  A completely elementary proof of a result stronger than the main theorem of \cite{BW} was later proven by Olav Nygaard and Dirk Werner.  Denoting the real part of a complex number $z$ by ${\rm Re}\, z$,   a \emph{slice} of $B$ is a set of the form
$$S(\Lambda, \ep)= \{g\in B: {\rm Re}\, \Lambda(g)\geq \sup{\rm Re}\, \Lambda(B)-\ep\},$$
for $\Lambda\in A^*$ and $\ep>0$.  Nygaard and Werner \cite{NW} showed that every slice of the closed unit ball of an infinte-dimensional uniform algebra has diameter 2.  There are thus several routes to proving Theorem~\ref{not-dual}.

\bigskip
{\bf Acknowledgements\/}:  The question of whether every reflexive uniform algebra is finite-dimensional was posed to us by Matthias Neufang, and this was the inspiration for the work presented here.  We thank Neufang for his question.  We thank the anonymous referee who brought to our attention Theorem~\ref{referee-theorem}.  We also thank Olav Nygaard for bringing the papers \cite{BW} and \cite{NW} to our attention.

\end{document}